

\documentclass[a4paper,12pt]{amsart}
\usepackage{a4wide}
\usepackage{amsmath,amssymb,amsthm}
\usepackage{mathrsfs}

\oddsidemargin=40pt
\evensidemargin=40pt


\usepackage{enumerate}
\usepackage{amsfonts}
\newcommand{\ud}[0]{\,\mathrm{d}}


\newcommand{\abs}[1]{\left|#1\right|}
\newcommand{\Babs}[1]{\Big|#1\Big|}
\newcommand{\norm}[2]{\left|#1\right|_{#2}}

\newcommand{\Norm}[2]{\left\|#1\right\|_{#2}}
\newcommand{\BNorm}[2]{\Big\|#1\Big\|_{#2}}
\newcommand{\bnorm}[2]{\Big|#1\Big|_{#2}}

\newcommand{\pair}[2]{\left\langle #1,#2 \right\rangle}


\newcommand{\supp}[0]{\operatorname{supp}}


\newcommand{\sign}[0]{\operatorname{sgn}}
\newcommand{\BMOtwo}[0]{\operatorname{BMO}}



%

\newcommand\R{\mathbb{R}}
\newcommand\C{\mathbb{C}}

\newcommand\Z{\mathbb{Z}}
\newcommand\T{\mathbf{T}}


\newcommand{\la}{\lambda}
\newcommand{\eps}{\varepsilon}


\newcommand{\Exp}[0]{\mathbb{E}}

\newcommand{\radem}[0]{{\bf r}}

\newcommand{\Rad}[0]{\operatorname{Rad}}

\newcommand{\UMD}[0]{\mathit{UMD}}
\newcommand{\LPR}[1]{\mathit{LPR}_{#1}}
\newcommand{\LPReq}[1]{\mathit{LPR}_{#1}^=}
\newcommand{\BMO}[0]{\mathit{BMO}}

%
%
%
%


\theoremstyle{plain}
\newtheorem{theorem}{Theorem}[section]
\newtheorem{lemma}[theorem]{Lemma}
\newtheorem{corollary}[theorem]{Corollary}

\theoremstyle{definition}

\numberwithin{equation}{section}




\vtop to0pt{\hbox{\textsc{\footnotesize To appear in The Royal Society of Edinburgh Proc. A (Mathematics)}
 }}

\vskip.5cm

%
%
%

\title[The Littlewood--Paley--Rubio de Francia property]
{The Littlewood--Paley--Rubio de Francia property \\
of a Banach space for the case of equal intervals}
\author[T. P. Hyt\"onen]{T. P. Hyt\"onen${}^1$}
\address{Department of Mathematics and Statistics\\ University of Helsinki\\ Gustaf H\"allstr\"omin katu 2b\\ 00014 Helsinki\\ Finland}
\email{tuomas.hytonen@helsinki.fi}
\author[J. L. Torrea]{J. L. Torrea${}^2$}
\author[D. V. Yakubovich]{D. V. Yakubovich${}^2$}
\address{Departamento de Matem\'aticas\\
Universidad Aut\'onoma de Madrid \\
Cantoblanco \\ 28049 Madrid\\ Spain}
\email{joseluis.torrea@uam.es}
\email{dmitry.yakubovich@uam.es}

\date{\today}


\begin{document}
\footnotetext[1]{The first author is supported by the Academy of Finland,
project 114374 ``Vector-valued singular integrals''.}
\footnotetext[2]{The second and the third authors acknowledge
the support of the Grant MTM2005-08359-C03-01 by the
European Fund FEDER and
Ministry of Science and Technology of Spain
(MEC). The third author also acknowledges the support of
MEC under the Ram\'on y
Cajal Programme (2002).}

\maketitle


\section{Introduction and results}

Let $X$ be a Banach space, with the norm denoted as $\abs{\cdot}_{X}$.
Put $\T=[0,1)$, which we identify with the
additive group $\R/\Z$. Then the
characters of $\T$ are the functions
$e_k(t)=e^{i2\pi k t}$, $k\in\Z$. Let
$L^p_{X}(\T)$ be the Bochner space
of $X$-valued $p$-integrable functions on $\T$. For intervals
$I\subset\Z$, the corresponding partial sums of the Fourier series
of $f$ are denoted by
\begin{equation}\label{eq:SI}
  S_I f:=\sum_{k\in I}\hat{f}(k)e_k.
\end{equation}

In the case when $X=\C$, useful inequalities of the type
\begin{equation}\label{eq:LP}
  \BNorm{\Big(\sum_j\abs{S_{I_j}f}^2\Big)^{1/2}}{L^p(\T)}\leq C\Norm{f}{L^p(\T)}
\end{equation}
are well known. When $\{I_j\}_{j\in\Z}$ is
the collection of
dyadic intervals, i.e., $I_0=\{0\}$ and
$I_j=\sign(j)[2^{|j|-1},2^{|j|})$ for $\abs{j}>0$, the estimate
\eqref{eq:LP} is the classical Littlewood--Paley inequality which
is valid (as well as the reverse estimate with $\geq$ in place of
$\leq$) for all $p\in(1,\infty)$. If the $I_j$ are disjoint
intervals of equal length, then \eqref{eq:LP} holds if and only if
$p\in[2,\infty)$; this was first proved by L.~Carleson
\cite{Carleson}, and then in different ways by both A.~C\'ordoba
\cite{Cordoba} and J.~L.\ Rubio de Francia \cite{RdF:83}, who also
finally showed that, for the same range of exponents,
the analogue of
\eqref{eq:LP} for the case of
the real line is actually true for an arbitrary collection of
disjoint subintervals $I_j$ of $\R$, see \cite{RdF:85}. As we will
explain later,
the versions of
\eqref{eq:LP} for the unit circle and for the real line are
equivalent.

By Hin\v{c}in's inequality, there is a two-sided comparison
\newpage
\[
  \Big(\sum_j\abs{a_j}^2\Big)^{1/2}\eqsim_p
  \Big(\Exp\Babs{\sum_j\radem_j a_j}^p\Big)^{1/p}
\]
for all $0<p<\infty$, where the $\radem_j$ are independent
Rademacher random variables on some probability space $\Omega$
(i.e., they take the values $+1$ and $-1$ with equal probability
$1/2$), and $\Exp$ is the mathematical expectation. Thus, an
equivalent formulation of \eqref{eq:LP} reads
\begin{equation*}
  \BNorm{\Exp\Babs{\sum_j\radem_j S_{I_j}f}}{L^p(\T)}\leq C\Norm{f}{L^p(\T)}.
\end{equation*}
\noindent This has proven to be a useful formulation when looking for
analogues
of the Littlewood--Paley type estimates \eqref{eq:LP} in
the Bochner spaces $L^p_X(\T)$. In particular, it is known from
the work of J.~Bourgain \cite{Bourgain:86} that
\begin{equation}\label{eq:LPB}
  \BNorm{\Exp\bnorm{\sum_j\radem_j S_{I_j}f}{X}}{L^p(\T)}
  \leq C\Norm{f}{L^p_X(\T)}
\end{equation}
holds for the dyadic system of intervals if and only if
$p\in(1,\infty)$ and $X$ is a Banach space with the
unconditionality property of martingale differences (UMD). In
fact, already the uniform boundedness of the partial sum
projections \eqref{eq:SI} on $L^p_X(\T)$ is equivalent to
$X\in\UMD$, since both are equivalent to the boundedness of the
Hilbert transform $H=-i[S_{[0,\infty)}-S_{(-\infty,0)}]$ on 
$L^p_X(\T)$; cf.~\cite{Bourgain:83,Bourgain:86,Burkholder}.

As for vector-valued analogues of \eqref{eq:LP} for other systems
of intervals, E.~Berkson, T.~A.\ Gillespie and J.~L.\ Torrea
\cite{BGT} have introduced the following terminology. The space
$X$ is said to have the $\LPR{p}$ (Littlewood--Paley--Rubio)
property if \eqref{eq:LPB} holds for an arbitrary finite disjoint
collection of intervals $I_j$ with a constant independent of the
intervals and $f\in L^p_X(\T)$. To be precise, the corresponding
property in \cite{BGT} was considered in the context of
$L^p_X(\R)$ that is
\begin{equation}\label{eq:LPBr}
  \BNorm{\Exp\bnorm{\sum_j\radem_j S_{J_j}^{\R}f}{X}}{L^p(\R)}
  \leq C\Norm{f}{L^p_X(\R)}
\end{equation}
for disjoint intervals $J_j\subset\R$. Here
\begin{equation}\label{eq:Dm2}
S^\R_{J_j} f(x) = \int_{I_j} \hat{f}(\lambda) e^{i2\pi\lambda x} \ud\lambda,
\end{equation}
where
\begin{equation}\label{eq:cont_Four}
\hat{f}(\lambda) = \int_\R f(x) e^{-i2\pi\lambda x} \ud x
\end{equation}
is the Fourier transform of $f$.
It was pointed out in \cite{GT} that this is
equivalent to $\LPR{p}$ as  defined  by \eqref{eq:LPB}.

From the scalar-valued
results and a restriction to a one-dimensional subspace it is
clear that property $\LPR{p}$ is meaningful only for
$p\in[2,\infty)$. Moreover, by using the characterization of
UMD by the uniform $L^p_X(\T)$-boundedness of the $S_I$, it
follows that $\LPR{p}\Rightarrow\UMD$.

There is also a connection to the type of the Banach space, which
is defined as follows. The space $X$ has type $t$ if and only if
\begin{equation*}
  \Exp\bnorm{\sum_j\radem_j x_j}{X}\leq C\Big(\sum_j\norm{x_j}{X}^t\Big)^{1/t}.
\end{equation*}
This condition is trivial (by the triangle inequality) for $t=1$;
it becomes more restrictive with increasing $t$ and an
impossibility for $t>2$. One defines
\begin{equation*}
  p(X):=\sup\{t\in[1,2]:X\text{ has type }t\}.
\end{equation*}
It was shown in \cite{BGT} that
\begin{equation*}
  X\in LPR_p\quad\Rightarrow\quad p(X)=2.
\end{equation*}

Besides these observations, the $\LPR{p}$ property remains quite
mysterious. For instance, it is not known whether there is an
implication between $\LPR{p}$ and $\LPR{q}$ for two different
$p,q\in[2,\infty)$, and basically the only known examples of
spaces with this property are classical $L^p$ spaces. On the other
hand, the $\LPR{p}$ property has some useful implications
concerning multipliers of $X$-valued Fourier series,
cf.~\cite{Lacey, HP}. We refer to \cite{CowlingTao, KislyakovParilov}
for related results.

In this note, our aim is to gain understanding of this property by
studying the special case of equal-length intervals. Thus we say
that the Banach space $X$ has the $\LPReq{p}(\T)$ (respectively
$\LPReq{p}(\R)$)  property if \eqref{eq:LPB} (respectively
\eqref{eq:LPBr})  holds with a fixed constant $C$ for all
$f\in L^p_X(\T)$ (respectively $f\in L^p_X(\R)$) and all finite
sequences of non-overlapping intervals
$I_j$ of equal length. We also introduce a
number of other variants of this condition after recalling some
notation.

Let $w\in L^1(\T)$ be a weight function. We denote by
$L^p_X(\T,w)$ the weighted $L^p$ space, i.e., the $L^p$ space with
respect to the measure $w(t)\ud t$. In the sequel, we are
particularly interested in weights in the Muckenhoupt $A_q$
classes.

Let $M:[0,\infty)\to[0,\infty)$ be a convex function
such that $M(0)=0$ and $\lim_{t\to\infty} M(t)=\infty$.
Recall that the Orlicz space
$L_M(\T)$ consists of those functions
$g\in L^1(\T)$, which satisfy
$\int_{\T}M(\lambda\abs{g(t)})\ud t<\infty$ for some
$\lambda>0$. It is a Banach space with the norm
\[
   \Norm{g}{L_M(\T)}
   =\inf\big\{ \lambda>0: \quad
    \int_{\T}M(\abs{g(t)}/\lambda)\ud t<1 \big\}.
\]
Its $X$-valued version is denoted with the subscript $X$, as usual.
In the case of the function $M_0(t)=\exp(t)-1$, we denote
$L_{M_0}(\T)=\exp L(\T)$.

In the following conditions, it is always understood that there
should be a fixed $C$ such that the given inequality holds
whenever $I_j$ are allowed to be any finite collection of
non-overlapping intervals of equal length, and $f$ an arbitrary
function in the space indicated by the right-hand side of the
inequality. We say that $X$ satisfies $\LPReq{p}(\T,w)$ if
\begin{equation}\label{eq:LPweight}
  \BNorm{\Exp\bnorm{\sum_j\radem_j S_{I_j} f}{X}}{L^p(\T,w)}
  \leq C\Norm{f}{L^p_X(\T,w)},
\end{equation}
we say that $X$ satisfies $\LPReq{\infty}(\T)$ if
\begin{equation}\label{eq:LPexp}
  \BNorm{\Exp\bnorm{\sum_j\radem_j S_{I_j}f}{X}}{\exp L(\T)}
  \leq C\Norm{f}{L^{\infty}_X(\T)};
\end{equation}
and finally $X$ satisfies $\LPReq{\infty,1}(\T)$ if
\begin{equation}\label{eq:LPweak}
  \BNorm{\Exp\bnorm{\sum_j\radem_j S_{I_j}f}{X}}{L^1(\T)}
  \leq C\Norm{f}{L^{\infty}_X(\T)}.
\end{equation}
We shall also say that  $X$ satisfies $\LPReq{p}(\R,w)$ if
an analogue of formula \eqref{eq:LPweight} holds, with
$\T$ replaced by $\R$.

Out main result is that all these equal-interval conditions are
equivalent, and that moreover they admit a simple characterization in
terms of well-established Banach space properties. This also
sharpens the necessary conditions for the original $\LPR{p}$
property from the earlier results quoted above.

\begin{theorem} \label{toro}{\bf (Case of $\T$.)}
Let $X$ be a Banach space. The following conditions are equivalent:
\begin{itemize}
  \item $X$ is a UMD space with type $2$,
  \item $X$ has $\LPReq{p}(\T,w)$ for all $p\in[2,\infty)$ and all $w\in A_{p/2}$,
  \item $X$ has $\LPReq{p}(\T)$ for some $p\in[2,\infty)$,
  \item $X$ has $\LPReq{\infty}(\T)$,
  \item $X$ has $\LPReq{\infty,1}(\T)$.
\end{itemize}
\end{theorem}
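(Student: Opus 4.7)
The plan is to close a cycle of implications through condition (1). Two arrows are essentially formal: $(2)\Rightarrow(3)$ by specializing $w\equiv 1$ and picking any single exponent $p$; $(4)\Rightarrow(5)$ from the elementary Orlicz inclusion $\Norm{g}{L^1(\T)}\le\Norm{g}{\exp L(\T)}$ on the probability space $\T$, obtained from $M_0(t)=e^t-1\ge t$ in the Luxemburg defining inequality. Moreover, each of $(3)$ and $(5)$ already implies the UMD property of $X$ by the standard device of restricting the hypothesis to a single interval: this yields uniform $L^p_X(\T)$-boundedness of the projections $S_I$ and, by letting $I$ exhaust a half-line, the boundedness of the Hilbert transform on $L^p_X(\T)$.

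The principal analytical step is $(1)\Rightarrow(2)$. Since $X$ is UMD, so is $\Rad(X)$, being a closed subspace of $L^2(\Omega;X)$. I view $f\mapsto Tf:=\sum_j\radem_jS_{I_j}f$ as a vector-valued Fourier multiplier $L^p_X(\T,w)\to L^p_{\Rad(X)}(\T,w)$; writing $S_{I_j}f=e_{m_j}\cdot S_{[0,N)}(e_{-m_j}f)$, the operator $T$ has convolution kernel
\begin{equation*}
  K_\radem(t)=\Big(\sum_{k=0}^{N-1}e^{i2\pi kt}\Big)\sum_j\radem_j e^{i2\pi m_jt},
\end{equation*}
the first factor being a translate of the Dirichlet kernel, which supplies Calder\'on--Zygmund off-diagonal decay $|D_N(t)|\lesssim\min(N,|t|^{-1})$, and the second a random trigonometric polynomial. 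The key task is to obtain Calder\'on--Zygmund-type kernel estimates and an unweighted $L^p$ bound for $T$ uniform in the number of intervals, by combining the UMD property of $\Rad(X)$ with the type 2 hypothesis, which enters via the pointwise Khintchine-type comparison $\Exp\bnorm{\sum_j\radem_jx_j}{X}\lesssim(\sum_j\norm{x_j}{X}^2)^{1/2}$ that controls the random kernel uniformly. The weighted $A_{p/2}$ extension then follows from standard vector-valued weighted Calder\'on--Zygmund machinery; the appearance of $A_{p/2}$ (rather than $A_p$) reflects the intrinsically quadratic nature of the Rademacher square function.

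For the converse $(3)\Rightarrow(1)$ (and analogously $(5)\Rightarrow(1)$), UMD being already extracted, I still need type 2. This requires a sharpened form of the argument of \cite{BGT}. Applied to a test function $f=\sum_{j=1}^J x_j e_{m_j}$ with equal one-frequency intervals $I_j=[m_j,m_j+1)$, the hypothesis together with Kahane's contraction principle applied to the modulus-one factors $e_{m_j}(t)$ yields, after careful extraction that exploits the uniform length of the intervals, the type 2 inequality $\Exp\bnorm{\sum_j\radem_j x_j}{X}\le C(\sum_j\norm{x_j}{X}^2)^{1/2}$. The equal-length condition is essential: it is what allows one to upgrade the conclusion of \cite{BGT} from $p(X)=2$ to genuine type 2. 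The step $(5)\Rightarrow(1)$ proceeds analogously with $L^\infty$--$L^1$ in place of $L^p$--$L^p$.

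Finally, $(1)\Rightarrow(4)$ is an extrapolation step. From the weighted estimates in $(2)$, Fefferman--Stein duality against $A_1$ weights yields a $\BMO$-type bound $\Norm{Tf}{\BMO}\le C\Norm{f}{L^\infty_X(\T)}$, and the John--Nirenberg inequality upgrades this to the $\exp L$ bound of $(4)$. I expect the main obstacle to be the subtle use of type 2 in $(1)\Rightarrow(2)$: the tempting $\ell^2$-valued square function estimate $\Norm{(\sum_j\norm{S_{I_j}f}{X}^2)^{1/2}}{L^p(\T)}\le C\Norm{f}{L^p_X(\T)}$ at $p=2$ would reduce to a Bessel inequality in $L^2_X$ which, by Kwapie\'n's theorem, is equivalent to $X$ being isomorphic to a Hilbert space, strictly stronger than type 2. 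The proof must therefore keep the Rademacher structure alive throughout and invoke type 2 only in localized, pointwise steps on the random kernel, never via a global $\ell^2$ reduction.
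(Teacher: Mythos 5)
Your global scheme (close a cycle through the equivalence with ``UMD + type 2'') matches the paper's, and the formal arrows $(2)\Rightarrow(3)$ and $(4)\Rightarrow(5)$ are fine, as is the extraction of UMD from a single interval (though note that from the $L^\infty$--$L^1$ statement $(5)$ the paper must pass through the $H^1_X\to L^1_X$ boundedness of the Hilbert transform via atoms, then Blasco and Bourgain, rather than directly via uniform $L^p_X$-boundedness of the $S_I$). However, the two principal steps in your sketch contain genuine gaps.

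For $(1)\Rightarrow(2)$, your random Calder\'on--Zygmund kernel $K_\radem(t)=D_N(t-c)\sum_j\radem_j e^{i2\pi m_j t}$ cannot be controlled uniformly in the number $J$ of intervals: the random factor $\sum_j\radem_j e^{i2\pi m_jt}$ has $L^2(\T)$ norm exactly $J^{1/2}$, and the type $2$ inequality applied pointwise only gives $\Exp\big|\sum_j\radem_j e^{i2\pi m_jt}\big|\lesssim J^{1/2}$, so no uniform size or H\"ormander bound follows. The paper's route is essentially different and is what makes the equal-length hypothesis bite. One first reduces sharp cutoffs to smooth de la Vall\'ee Poussin multipliers $m_j=\hat V_{L-1}(\cdot+Lj)$ by a lemma using \emph{only} UMD (conjugation by $e_{a_j}$ plus boundedness of $H$ on $L^p_{\Rad X}$ or $L^\infty\to\BMO$). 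One then linearizes by duality: for fixed $t$ one writes $\sum_j\pair{\lambda_j}{T_{m_j}f(t)}=\int_\T V_{L-1}(y)\pair{g_\lambda(Ly)}{f(t-y)}\ud y$ with $g_\lambda(y)=\sum_j e_{-j}(y)\lambda_j$, and the type $2$ hypothesis enters once and for all through the lemma $\Norm{g_\lambda}{L^2_{X^*}(\T)}\lesssim\big(\Exp\bnorm{\sum_j\radem_j\lambda_j}{X^*}^2\big)^{1/2}$ (a cotype-$2$ inequality for $X^*$ w.r.t.\ the trigonometric system, deduced from a theorem of Dodds and Sukochev on type $2$ of $X$). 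The kernel $V_{L-1}$ (unlike $D_N$) has the integrable decay $|V_{L-1}(t)|\lesssim\min(L,(Lt^2)^{-1})$, and a Cauchy--Schwarz and Hardy--Littlewood maximal function argument in the variable $L y$ gives $|G_\lambda f(t)|\lesssim(M\norm{f}{X}^2(t))^{1/2}$; this is why $A_{p/2}$ appears, and it relies crucially on the fact that $g_\lambda(Ly)$ is $1/L$-periodic, which is an artifact of the intervals having common length $L$. Your scheme keeps the Rademacher structure alive, as you correctly argue one must, but it lacks a mechanism to make type $2$ actually produce uniformity.

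For the necessity of type $2$, testing on $f=\sum_j x_je_{m_j}$ with singleton intervals and applying Kahane contraction only yields $\Exp\bnorm{\sum_j\radem_jx_j}{X}\lesssim\Norm{\sum_j x_je_{m_j}}{L^p_X(\T)}$, and the right-hand side is \emph{not} $(\sum_j\norm{x_j}{X}^2)^{1/2}$ for a general UMD $X$; there is no ``careful extraction'' that repairs this. The paper instead proves a bootstrap lemma: under $(5)$, for a two-parameter family of equal-length intervals $\{I_{jk}\}$ and functions $f_j$, one has $\bnorm{\Exp\bnorm{\sum_{j,k}\radem_{jk}S_{I_{jk}}f_j}{X}}{L^1(\T)}\lesssim\bnorm{\max_{\eps_j=\pm1}\bnorm{\sum_j\eps_jf_j}{X}}{L^\infty(\T)}$, obtained by translating the Fourier supports of the $f_j$ apart. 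Letting $\{I_{jk}\}_k$ exhaust $\Z$, replacing Rademachers by complex Gaussians (legitimate since UMD gives finite cotype), and observing that $\sum_k\gamma_{jk}\hat\phi_j(k)$ is a single Gaussian of variance $\Norm{\phi_j}{L^2(\T)}^2$, one arrives at $\Exp\bnorm{\sum_j\radem_j\Norm{\phi_j}{L^2}x_j}{X}\lesssim\bnorm{\max_{\eps}\bnorm{\sum_j\eps_j\phi_jx_j}{X}}{L^\infty}$. Taking $\phi_j=1_{[(j-1)/N,j/N)}$ makes the right side $\max_j\norm{x_j}{X}$ and the left side $N^{-1/2}\Exp\bnorm{\sum_j\radem_jx_j}{X}$, giving type $2$ for equal-norm vectors, and James's observation upgrades this to full type $2$. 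This Gaussian replacement and the equal-norm reduction are the actual content; nothing this strong follows from a single elementary test function.

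Finally, $(1)\Rightarrow(4)$ does not come from Fefferman--Stein extrapolation of the weighted estimates: the paper proves $(4)$ directly inside the smooth-multiplier lemma by using the $L^\infty\to\BMO$ boundedness of $H$ in UMD spaces together with John--Nirenberg; the resulting endpoint bound uses the same kernel computation with $p=\infty$.
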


We also have the following result.

\begin{theorem}\label{real} {\bf (Case of $\R$.)}
Let $X$ be a Banach space. The following conditions are equivalent:
\begin{itemize}
  \item $X$ is a UMD space with type $2$,
  \item $X$ has $\LPReq{p}(\R,w)$ for all $p\in[2,\infty)$ and all $w\in A_{p/2}$,
  \item $X$ has $\LPReq{p}(\R)$ for some $p\in[2,\infty)$.
\end{itemize}
\end{theorem}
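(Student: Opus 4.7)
The plan is to deduce Theorem~\ref{real} from Theorem~\ref{toro} by transferring the equal-interval inequalities between the circle $\T$ and the line $\R$. The implication $(b)\Rightarrow(c)$ is immediate (take $w\equiv 1\in A_{p/2}$), so the content is in proving $(c)\Rightarrow(a)$ and $(a)\Rightarrow(b)$.

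For $(c)\Rightarrow(a)$, I would transfer $\LPReq{p}(\R)$ to $\LPReq{p}(\T)$ by a vector-valued de~Leeuw argument, after which Theorem~\ref{toro} supplies UMD with type~$2$. Concretely, given disjoint integer intervals $I_j=\{a_j,\dots,a_j+N-1\}\subset\Z$ of common length $N$, inflate each to $\tilde I_j=[a_j-\tfrac12,\,a_j+N-\tfrac12)\subset\R$. The $\tilde I_j$ are disjoint equal-length $\R$-intervals whose indicators are continuous at every point of $\Z$ and restrict to $\mathbf 1_{I_j}$ on $\Z$. For each fixed sign sequence $\epsilon$, de~Leeuw's theorem applied to the scalar multiplier $\sum_j \epsilon_j\mathbf 1_{\tilde I_j}$ transfers its $L^p_X(\R)$-norm to the $L^p_X(\T)$-norm of $\sum_j\epsilon_j S_{I_j}$ with the same constant; averaging over $\epsilon$ recasts the inequality in the desired Rademacher form.

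For $(a)\Rightarrow(b)$, Theorem~\ref{toro} furnishes $\LPReq{p}(\T,w')$ for every $p\in[2,\infty)$ and every $w'\in A_{p/2}(\T)$, and this must be transferred to the line with a weight $w\in A_{p/2}(\R)$. I would use the classical scaling--periodization argument: given a compactly supported $f\in L^p_X(\R,w)$ and disjoint equal-length $\R$-intervals $J_j$, rescale so that the $J_j$ have endpoints in $\tfrac1M\Z$ and the support of $f$ fits in $[-M/2,M/2]$ for large $M\in\N$; then work on the torus $M\T=\R/M\Z$, whose Fourier-dual lattice is $\tfrac1M\Z$. The $\R$-projection $S^\R_{J_j}$ coincides on such $f$ with the $\T$-projection $S_{I_j}$ onto an equal-length integer interval of $\tfrac1M\Z$. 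Applying $\LPReq{p}(M\T,\tilde w)$ with $\tilde w$ the $M$-periodization of $w\cdot\mathbf 1_{[-M/2,M/2]}$ and letting $M\to\infty$ yields $\LPReq{p}(\R,w)$.

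The main obstacle is the uniform control of the weight in $(a)\Rightarrow(b)$. One needs that the periodization $\tilde w$ belongs to $A_{p/2}(M\T)$ with $A_{p/2}$-constant bounded independently of $M$, so that the constant supplied by Theorem~\ref{toro}—which depends only on the $A_{p/2}$-constant of the weight and on the UMD- and type-$2$-constants of $X$—does not blow up in the limit. In the unweighted case this reduces to the classical $\T$--$\R$ equivalence already alluded to in the introduction; the weighted extension is a standard though delicate bookkeeping argument, and is the main technical point of the proof.
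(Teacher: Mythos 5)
Your overall plan — deduce Theorem~\ref{real} from Theorem~\ref{toro} by transferring equal-interval inequalities between $\T$ and $\R$ — is the right one, and for the direction $(c)\Rightarrow(a)$ your de~Leeuw argument is a legitimate alternative to the paper's route (the paper simply cites the transference proof of Theorem~2.5 of \cite{GT}; both do the same job). The problem is your direction $(a)\Rightarrow(b)$.

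You propose to transfer the \emph{weighted} circle estimate $\LPReq{p}(\T,w')$ to $\LPReq{p}(\R,w)$ by restricting $w$ to $[-M/2,M/2]$, periodizing, and letting $M\to\infty$. You correctly identify the uniform control of the $A_{p/2}$-constant of the periodized weight as the key technical point, but you do not resolve it — and in fact it fails in general. The $A_q(\R)$ condition controls averages over honest intervals of $\R$, whereas $A_q(M\T)$ for the periodized weight also imposes conditions on intervals that ``wrap around'' the seam $\pm M/2$, i.e., sets of the form $[M/2-\delta,M/2]\cup[-M/2,-M/2+\delta]$, which are not intervals of $\R$. For a weight such as $w(x)=|x-(M/2-1)|^{\alpha}$ with $-1<\alpha<0$ (so $w\in A_q(\R)$ with constant independent of $M$), the values of $w$ near $M/2^-$ are of order $1$ while those near $-M/2^+$ are of order $M^{\alpha}$, and a short computation over a small wrapped interval shows the $A_q$ ratio of the periodization blows up like $M^{-\alpha/(q-1)}$ as $M\to\infty$. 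So the periodization of an $A_q(\R)$ weight cut to $[-M/2,M/2]$ need not be $A_q(M\T)$ with uniform constant, and the passage to the limit is not justified.

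The paper sidesteps this entirely. Lemma~\ref{paso} only transfers the \emph{unweighted} statement $\LPReq{p}(\T)\Leftrightarrow\LPReq{p}(\R)$, by a Riemann-sum approximation of the line Fourier integral by scaled-lattice Fourier sums (this is in spirit close to your periodization idea, but only needs $w\equiv 1$). The weighted estimate $\LPReq{p}(\R,w)$ is then obtained \emph{directly on $\R$}, not by transfer: the $\R$-analogue of Lemma~\ref{lem:redToSmooth} (which uses only the weighted $L^p_X(\R,w)$-boundedness of the Hilbert transform for UMD $X$ and $w\in A_p$) reduces the problem to smooth de~la~Vall\'ee-Poussin--type multipliers, and then the $\R$-analogue of \eqref{eq:smoothToProve} — which, as observed at the end of Section~3, uses only type~$2$ and holds verbatim on $\R$ as in \cite{RdF:83} — finishes the proof. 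You should replace your $(a)\Rightarrow(b)$ step by this direct-on-$\R$ argument (or supply a genuine resolution of the seam problem, which I do not think exists in the stated generality).
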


The proof of  Theorem \ref{toro} will comprise the main part of our arguments.
Since we have the following containments, with bounded inclusion maps,
\[
  L^{\infty}(\T)\subset\exp L(\T)\subset L^p(\T)\subset L^1(\T),\qquad p\in(1,\infty),
\]
it is immediate that $\LPReq{p}(\T)\Rightarrow\LPReq{\infty,1}(\T)$ for
all $p\in[2,\infty]$. Hence it suffices to show that
$\LPReq{\infty,1}(\T)$ implies UMD and type $2$, and UMD
together with type $2$ imply $\LPReq{p}(\T,w)$ for all
$p\in[2,\infty)$ and $w\in A_{p/2}$, as well as $\LPReq{\infty}(\T)$.

Theorem \ref{real} will be derived from Theorem \ref{toro}.
In \cite{GT}, a proof is provided in order
to see that
property $\LPR{p}$  for $\R$ implies
the property $\LPR{p}$  for $\T$.
This proof can be adapted directly, and
one gets that $\LPReq{p}(\R)$ implies $\LPReq{p}(\T)$.
The converse will follow from Lemma \ref{paso}.

Finally, in Section \ref{miscelanea} we will
comment on the validity of $L^\infty$--$\BMOtwo$ results.

\section{The necessity of UMD and type $2$} 

Let us first deal with the UMD property. This already follows from
\eqref{eq:LPweak} with an arbitrary single interval $I$; the
estimate says that
\[
  \Norm{S_I f}{L^1_X(\T)}\leq C\Norm{f}{L^{\infty}_X(\T)},
\]
which implies a similar inequality with the Hilbert transform
$H=-i[S_{[0,\infty)}-S_{(-\infty,0)}]$ in place of $S_I$. Using
the description of the Hardy space $H^1_X(\T)$ in terms of
$L^{\infty}_X(\T)$-atoms, it follows that $H$ is bounded from
$H^1_X(\T)$ to $L^1_X(\T)$. This implies its boundedness on
$L^p_X(\T)$ by O.~Blasco \cite{Blasco}, which in turn implies UMD
by Bourgain~\cite{Bourgain:83}.

In what follows, we will use the following well-known result.
\begin{lemma}[Kahane's contraction principle, see
\cite{KunstmannWeis}, Proposition 2.5]\label{lem:Kahane-contr}
For any se\-quen\-ces $\{a_n\}\subset\C$,
$\{x_n\}\subset X$  and any $p\in [1,\infty) $,
\[
\Exp \abs{\sum_n a_n\radem_n x_n}_X^p
\leq
\big(2\sup_n |a_n|\big)^p\cdot
\Exp \abs{\sum_n \radem_n x_n}_X^p
\]
\end{lemma}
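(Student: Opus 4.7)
My plan is to reduce to the normalized case $\sup_n |a_n| \le 1$ by homogeneity, then establish the real-scalar version via a convexity argument on a finite-dimensional cube, and finally handle the complex case by splitting $a_n = \Re(a_n) + \img \Im(a_n)$. I may assume the sum involves only finitely many indices, the general case following by a standard approximation argument.

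For real scalars, consider
\[
\phi(a) := \Exp \abs{\sum_n a_n \radem_n x_n}_X^p
\]
as a map from $\R^N$ to $[0,\infty)$, where $N$ is the number of relevant indices. It is convex in $a$: the map $a \mapsto \sum_n a_n \radem_n x_n$ is linear, the norm on $X$ is convex, $t \mapsto t^p$ is convex and nondecreasing on $[0,\infty)$ for $p \ge 1$, and $\Exp$ preserves convexity. The cube $[-1,1]^N$ is the convex hull of its $2^N$ vertices $\{-1,+1\}^N$. At any vertex $\varepsilon = (\varepsilon_n)$, the symmetry and independence of the Rademachers imply that $(\varepsilon_n \radem_n)_n$ has the same joint distribution as $(\radem_n)_n$, so
\[
\phi(\varepsilon) = \Exp \abs{\sum_n \radem_n x_n}_X^p.
\]
Convexity of $\phi$ then yields $\phi(a) \le \Exp \abs{\sum_n \radem_n x_n}_X^p$ for every $a \in [-1,1]^N$, which is the real case with constant $1$.

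For complex scalars, write $a_n = \Re(a_n) + \img \Im(a_n)$ and apply the triangle inequality in $X$,
\[
\abs{\sum_n a_n \radem_n x_n}_X \le \abs{\sum_n \Re(a_n) \radem_n x_n}_X + \abs{\sum_n \Im(a_n) \radem_n x_n}_X,
\]
followed by Minkowski's inequality in $L^p(\Omega,\prob)$ and the real case applied to each of the two summands, using $|\Re(a_n)|,|\Im(a_n)| \le |a_n|$. Raising to the $p$-th power then delivers the factor $(2\sup_n |a_n|)^p$.

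No serious obstacle is anticipated; the only subtlety is recognizing the convex structure of $\phi$ on the cube together with the symmetry that renders all vertices equivalent. The factor $2$ is an unavoidable artefact of the real/imaginary split in this approach — a sharper constant would require a different argument (for example, complex interpolation or a conditional-expectation trick) — but is not needed for the applications in the sequel.
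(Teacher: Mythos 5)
The paper does not prove this lemma; it is quoted verbatim with a citation to Kunstmann--Weis, Proposition~2.5, so there is no ``paper's own proof'' to compare against. Your argument is correct and is, in fact, the standard proof of Kahane's contraction principle: reduction to the finite normalized case, convexity of $a\mapsto \Exp\bigl|\sum_n a_n\radem_n x_n\bigr|_X^p$ on the cube $[-1,1]^N$ (linear map into $X$, composed with the convex norm, then the convex nondecreasing $t\mapsto t^p$, then expectation), equality of $\phi$ at all vertices by the distributional symmetry $(\varepsilon_n\radem_n)\overset{d}{=}(\radem_n)$, and finally the real/imaginary split with the triangle inequality in $L^p_X(\Omega)$ to produce the factor $2$. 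All steps are sound, including the implicit use of the fact that a convex function on a polytope attains its maximum at a vertex. One could also mention that the constant $2$ is known to be improvable to $\pi/2$ for complex scalars, but $2$ is what the paper states and is all that the later arguments require, so your remark on this point is accurate.
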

%
%

We turn to the type~$2$ property, starting from the following
bootstrapping of the original estimate.

\begin{lemma}\label{lem:LPbootstrap}
Let $X$ have $\LPReq{\infty,1}(\T)$. Let $\{I_{jk}\}_{j,k}$ be a
two-parameter family of equal-length intervals such that for every
fixed $j$, the family $\{I_{jk}\}_{k}$ is finite and consists of
disjoint intervals. Then
\begin{equation}\label{eq:LPbootstrap}
  \BNorm{\Exp\bnorm{\sum_{j,k}\radem_{jk}S_{I_{jk}}f_j}{X}}{L^1(\T)}
  \leq C\BNorm{\max_{\eps_j=\pm 1}\bnorm{\sum_j\eps_j f_j}{X}}{L^{\infty}(\T)}
\end{equation}
for all finite families of $f_j\in L^{\infty}_X(\T)$.
\end{lemma}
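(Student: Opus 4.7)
The principal obstacle is that the hypothesis $\LPReq{\infty,1}(\T)$ supplies an estimate only for one family of disjoint equal-length intervals, whereas we are handed one such family $\{I_{jk}\}_k$ for each $j$, and these families may overlap heavily across different $j$. The naive triangle-inequality bound summing the single-family estimates in $j$ produces $C\sum_j\Norm{f_j}{\infty}$, which is strictly weaker than the desired $C\Norm{M}{\infty}$ (with $M(t):=\max_{\eps_j=\pm1}\bnorm{\sum_j\eps_j f_j(t)}{X}$). My plan is to use a Fourier modulation to spread the families apart in frequency so that the union becomes a single disjoint family, and then return to the original configuration at the cost of an absolute constant by Kahane's contraction principle.

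After a routine density reduction, assume every $f_j$ is a trigonometric polynomial with spectrum contained in a fixed window $[-S,S]$ that also contains every $I_{jk}$. Let $N$ denote the common length of the intervals and pick a positive integer $T$ with $TN\ge 4S$. Define the shifted intervals $\tilde I_{jk}:=I_{jk}+jTN$, which are pairwise disjoint (across both indices) and still of length $N$. Set
\[
  \tilde F(t):=\sum_j e_{jTN}(t)\,f_j(t).
\]
By the choice of $T$, the spectra of the modulated summands $e_{jTN}f_j$ occupy pairwise disjoint windows $[jTN-S,jTN+S]$, each of which contains the corresponding $\tilde I_{jk}$; a short computation then yields the key identity
\[
  S_{\tilde I_{jk}}\tilde F = e_{jTN}\,S_{I_{jk}}f_j.
\]

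Applying the hypothesis $\LPReq{\infty,1}(\T)$ to $\tilde F$ with the equal-length disjoint family $\{\tilde I_{jk}\}_{j,k}$ and substituting this identity gives
\[
  \BNorm{\Exp\bnorm{\sum_{jk}\radem_{jk}\,e_{jTN}\,S_{I_{jk}}f_j}{X}}{L^1(\T)}\le C\,\Norm{\tilde F}{L^\infty_X(\T)}.
\]
Invoking Lemma~\ref{lem:Kahane-contr} pointwise in $t$ with the unit-modulus coefficients $e_{-jTN}(t)$ removes the phases from the left-hand side, producing at most an extra factor of $2$, so that
\[
  \BNorm{\Exp\bnorm{\sum_{jk}\radem_{jk}\,S_{I_{jk}}f_j}{X}}{L^1(\T)}\le 2C\,\Norm{\tilde F}{L^\infty_X(\T)}.
\]

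The decisive remaining step is the pointwise bound $\abs{\tilde F(t)}_X\le 2M(t)$. Writing $e_{jTN}(t)=\cos(2\pi jTNt)+\img\sin(2\pi jTNt)$ splits $\tilde F(t)$ into two $X$-valued expressions whose real coefficients all lie in $[-1,1]$. Since $[-1,1]^n$ is the convex hull of $\{\pm1\}^n$, each such expression is a convex combination of the sign-vector evaluations $\sum_j\eps_j f_j(t)$, and so has $X$-norm at most $M(t)$. The triangle inequality then gives $\abs{\tilde F(t)}_X\le 2M(t)$, whence $\Norm{\tilde F}{L^\infty_X}\le 2\Norm{M}{L^\infty(\T)}$. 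Combined with the previous display this yields the lemma with the absolute constant $4C$.
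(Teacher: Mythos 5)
Your argument is correct and is essentially the paper's own proof: you modulate each $f_j$ by $e_{jTN}$ to separate the families in frequency so that $\{I_{jk}+jTN\}_{j,k}$ becomes a single disjoint equal-length family, apply the $\LPReq{\infty,1}(\T)$ hypothesis to $\tilde F=\sum_j e_{jTN}f_j$, strip the phases by Kahane's contraction (factor $2$), and bound $\|\tilde F\|_{L^\infty_X}$ by $2\|\max_{\eps_j=\pm1}|\sum_j\eps_j f_j|_X\|_{L^\infty}$ via the convex-hull observation. The only cosmetic difference is that the paper picks the shifts $N_j$ abstractly (so that windows $J_j+N_j$ are disjoint) rather than as the explicit arithmetic progression $jTN$, and you spell out the final convexity step that the paper leaves implicit.
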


\begin{proof}
We may assume by approximation that the Fourier transforms
$\hat{f}_j$ are compactly supported. Then we may choose intervals
$J_j$ such that $J_j\supseteq\supp\hat{f}_j$, and also
$J_j\supseteq I_{jk}$ for all $k$. Next we choose integers $N_j$
such that the translated intervals $J_j+N_j$ are all disjoint.
Then, manipulating the random sums with the help of Kahane's
contraction principle
\begin{equation*}\begin{split}
  &\BNorm{\Exp\bnorm{\sum_{j,k}\radem_{jk}S_{I_{jk}}f_j}{X}}{L^1(\T)}
  =\BNorm{\Exp\bnorm{\sum_{j,k}\radem_{jk}e_{-N_j}S_{I_{jk}+N_j}e_{N_j}f_j}{X}}{L^1(\T)} \\
  &\leq 2\BNorm{\Exp\bnorm{\sum_{j,k}\radem_{jk}S_{I_{jk}+N_j}e_{N_j}f_j}{X}}{L^1(\T)}
  =2\BNorm{\Exp\bnorm{\sum_{j,k}\radem_{jk}S_{I_{jk}+N_j}\sum_i e_{N_i}f_i}{X}}{L^1(\T)} \\
  &\leq 2C\BNorm{\sum_i e_{N_i}f_i}{L^{\infty}_X(\T)}
   \leq 4C\BNorm{\max_{\eps_i=\pm 1}\bnorm{\sum_j\eps_j f_j}{X}}{L^{\infty}(\T)},
\end{split}\end{equation*}
where, in the second equality, we used the fact that
$S_{I_{jk}+N_j}e_{N_i}f_i=0$ if $j\neq i$.
\end{proof}

\begin{corollary}
Let $X$ have $\LPReq{\infty,1}(\T)$. Let $x_j\in X$ and $\phi_j\in
L^{\infty}(\T)$
for $j=1,\ldots,N$. Then
\begin{equation*}
  \BNorm{\Exp\bnorm{\sum_{j=1}^N\radem_j\Norm{\phi_j}{L^2(\T)}x_j}{X}}{L^1(\T)}
  \leq C\BNorm{\max_{\eps_j=\pm 1}\bnorm{\sum_{j=1}^N\eps_j\phi_j x_j}{X}}{L^{\infty}(\T)}.
\end{equation*}
\end{corollary}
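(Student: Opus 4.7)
The natural approach is to apply Lemma~\ref{lem:LPbootstrap} with $f_j=\phi_j x_j$ and a well-chosen family of equal-length intervals, and then to lower bound the resulting left-hand side by $\Exp\bnorm{\sum_j\radem_j\Norm{\phi_j}{L^2}x_j}{X}$ using the scalar Khintchine inequality, Jensen's inequality, and the contraction principle (Lemma~\ref{lem:Kahane-contr}). After reducing by approximation to the case where each $\phi_j$ is a trigonometric polynomial with Fourier support in some common finite set $K\subset\Z$, I would take the singletons $I_{jk}=\{k\}$ for $k\in K$: these are all of length~$1$ and are trivially disjoint for each fixed~$j$. Since $S_{\{k\}}(\phi_j x_j)=\hat\phi_j(k)\,e_k\,x_j$, Lemma~\ref{lem:LPbootstrap} produces
\[
  \BNorm{\Exp\bnorm{\sum_{j,k}\radem_{jk}\hat\phi_j(k)\,x_j\,e_k}{X}}{L^1(\T)}
  \le C\BNorm{\max_{\eps_j=\pm1}\bnorm{\sum_j\eps_j\phi_j x_j}{X}}{L^\infty(\T)},
\]
whose right-hand side is already exactly what we want.

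To handle the left-hand side, set $\xi_j(t,\omega):=\sum_k\radem_{jk}(\omega)\hat\phi_j(k)e_k(t)$, so that the integrand above equals $\Exp_\omega\bnorm{\sum_j\xi_j(t,\omega)x_j}{X}$. For each fixed $t$, the random variables $\xi_j(t,\cdot)$ are mutually independent scalar Rademacher sums with
\[
  \Exp_\omega|\xi_j(t,\omega)|^2=\sum_k|\hat\phi_j(k)|^2=\Norm{\phi_j}{L^2(\T)}^2,
\]
and consequently $\Exp_\omega|\xi_j(t)|\ge c\Norm{\phi_j}{L^2}$ by scalar Khintchine. Introducing an independent Rademacher sequence $(\radem'_j)$ and exploiting the symmetrization $\xi_j\sim\radem'_j|\xi_j|$, a conditioning step together with Jensen's inequality (moving $\Exp_\omega$ inside the $X$-norm) yields
\[
  \Exp_\omega\bnorm{\sum_j\xi_j(t,\omega)\,x_j}{X}
  \ge \Exp_{\radem'}\bnorm{\sum_j\radem'_j\,\Exp_\omega|\xi_j(t)|\,x_j}{X}.
\]
A final application of Lemma~\ref{lem:Kahane-contr}, using the two-sided comparison $\Exp_\omega|\xi_j(t)|\sim\Norm{\phi_j}{L^2}$, replaces the coefficients $\Exp_\omega|\xi_j(t)|$ by $\Norm{\phi_j}{L^2}$ up to an absolute constant. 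Since the resulting lower bound is $t$-independent, integrating in $t$ preserves it, and combining with the bootstrap estimate completes the proof.

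The delicate point is the symmetrization $\xi_j\sim\radem'_j|\xi_j|$, which is literal only when $\xi_j$ is real-valued and fails in general once $\phi_j$ is complex. In the complex case one must instead split $\xi_j$ into its real and imaginary parts, each of which is a real Rademacher sum carrying a definite fraction of the total $L^2$-mass $\Norm{\phi_j}{L^2}^2$, and run the preceding Jensen-plus-contraction argument on the dominant part. A secondary, minor technicality is the initial reduction to trigonometric polynomials, which cannot be performed by plain density in $L^\infty$; a suitable regularization of $\phi_j$ controlling both sides of the inequality handles this point.
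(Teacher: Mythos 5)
Your approach is correct, and it takes a genuinely different route from the paper's at the crucial step. Both proofs begin identically: apply Lemma~\ref{lem:LPbootstrap} with $f_j=\phi_j x_j$ and the singletons $I_{jk}=\{k\}$ (with the implicit approximation), reducing to a lower bound for $\Exp\bnorm{\sum_{j,k}\radem_{jk}\hat\phi_j(k)e_k\,x_j}{X}$. They then diverge in how they extract $\Norm{\phi_j}{L^2}$. The paper replaces the Rademachers $\radem_{jk}$ by independent Gaussians $\gamma_{jk}$, using the equivalence of Rademacher and Gaussian random sums in spaces of finite cotype (available because $X$ has already been shown to be UMD), and then exploits Gaussian stability: $\sum_k\gamma_{jk}\hat\phi_j(k)$ is again Gaussian with variance exactly $\Norm{\phi_j}{L^2}^2$, so the double sum collapses to $\sum_j\gamma_j\Norm{\phi_j}{L^2}x_j$. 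Your argument stays inside the Rademacher world, using only symmetrization, Jensen's inequality for the norm, the scalar Khintchine inequality, and the contraction principle. This is more elementary and, importantly, is cotype-free: it works in an arbitrary Banach space and therefore does not lean on the UMD implication established earlier in the section, which is a small structural improvement.

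One remark on the complex case, which you correctly flag. The phrasing ``each of which \dots carries a definite fraction of the total $L^2$-mass'' is imprecise: only the sum $\Norm{\Re\xi_j}{L^2}^2+\Norm{\Im\xi_j}{L^2}^2=\Norm{\phi_j}{L^2}^2$ is fixed, and the split between the parts may vary arbitrarily with $j$; your subsequent ``dominant part'' device does repair this, and one then bounds $\Exp\bnorm{\sum_j\eta_j x_j}{X}\le 2\,\Exp\bnorm{\sum_j\xi_j x_j}{X}$ by contraction with coefficients $d_{jk}/c_{jk}$ of modulus at most one. A cleaner route is available, though, and avoids the real/imaginary split altogether: Lemma~\ref{lem:Kahane-contr} already permits complex coefficients of modulus at most $1$, so after symmetrizing $\{\xi_j\}\sim\{\radem'_j\xi_j\}$ (valid in the complex case, since $\xi_j\sim-\xi_j$), one applies contraction in $\radem'$ for each fixed $\omega$ with $a_j=|\xi_j(\omega)|/\xi_j(\omega)$ to obtain
\begin{equation*}
  \Exp_{\radem'}\bnorm{\sum_j\radem'_j|\xi_j(\omega)|x_j}{X}
  \le 2\,\Exp_{\radem'}\bnorm{\sum_j\radem'_j\xi_j(\omega)x_j}{X},
\end{equation*}
and then proceeds with Jensen and a final contraction exactly as in your real-valued argument.
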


\begin{proof}
We apply Lemma~\ref{lem:LPbootstrap} with $f_j=\phi_j\,x_j$ and
$I_{jk}=\{k\}$, and consider the limit where the collection
$I_{jk}$ covers all of $\Z$ for each $j$. Then
\eqref{eq:LPbootstrap} reads
\[
  \BNorm{\Exp\bnorm{\sum_{j,k}\radem_{jk}\hat{\phi_j}(k)x_j}{X}}{L^1(\T)}
  \leq C\BNorm{\max_{\eps_j=\pm 1}
                  \bnorm{\sum_j\eps_j \phi_j\,x_j}{X}}{L^{\infty}(\T)}.
\]

We already know that $X$ is a UMD space, hence has finite cotype.
In this situation, the expectations of norms of random sums of
vectors of $X$ with Rademacher coefficients $\radem_j$ are
comparable to similar expression with independent standard
complex Gaussian random variables $\gamma_j$ in place of $\radem_j$; see
e.g.~\cite{DJT}, 12.27. Thus
\[\begin{split}
  \BNorm{\Exp\bnorm{\sum_j\sum_k\radem_{jk}\hat{\phi_j}(k)x_j}{X}}{L^1(\T)}
  &\eqsim\BNorm{\Exp\bnorm{\sum_j\Big(\sum_k\gamma_{jk}\hat{\phi_j}(k)\Big)x_j}{X}}{L^1(\T)} \\
\end{split}\]
Observe that
$\Gamma_j:=\sum_k\gamma_{jk}\hat{\phi_j}(k)$ are centered
independent Gaussian random variable with variances
$\Exp|\Gamma_j|^2=\sum_k|\hat{\phi_j}(k)|^2=\Norm{\phi_j}{L^2(\T)}^2$.
Hence
\[\begin{split}
  \BNorm{\Exp\bnorm{\sum_j\sum_k\radem_{jk}\hat{\phi_j}(k)x_j}{X}}{L^1(\T)}
  &\eqsim\BNorm{\Exp\bnorm{\sum_j \gamma_j\Norm{\phi_j}{L^2(\T)}x_j}{X}}{L^1(\T)}.
\end{split}\]
After changing the $\gamma_j$ back to $\radem_j$, we have proved the assertion.
\end{proof}

We are finally in a position to establish
that the property  $\LPReq{\infty,1}(\T)$
implies type~$2$. We apply the Corollary
with $\phi_j=1_{[(j-1)/N,j/N)}$ and $x_j$ unit vectors in $X$. The result is
\[
  \Exp\bnorm{\sum_{j=1}^N\radem_j x_j}{X}\leq CN^{1/2},\qquad
  \norm{x_1}{X}=\ldots=\norm{x_N}{X}=1.
\]
This is the type~$2$ estimate for vectors of equal length. However, by an observation made by R.~C.\ James \cite{James}, this already implies the full type~$2$ condition, and we are done.

\section{Proof of the Littlewood--Paley estimate for equal intervals. Case
of the Unit Circle}

Our vector-valued proof follows the approach of Rubio de Francia
\cite{RdF:83} from the scalar-valued case, although he dealt with
the analogous estimate on $\R$ instead of $\T$. The first step
consists in replacing the spectral projections $S_I$ by some nicer
approximations. Thus, let $m_j$ be functions such that
$m_j|I_j\equiv 1$, and let
\[
  T_{m_j}f:=\sum_{k\in\Z}m_j(k)\hat{f}(k)e_k
\]
be the corresponding Fourier multiplier operators. Then
\[
  \sum_j\radem_j S_{I_j}f
  =\sum_j\radem_j S_{I_j}T_{m_j}f.
\]

\begin{lemma}\label{lem:redToSmooth}
Let $X$ be a UMD space.
Let $p\in(1,\infty)$ and $w\in A_p$.
Then, for any family of non-intersecting subintervals $I_j$ of $\Z$
(of arbitrary lengths) and any family $\{ g_j\}$
of functions in  $L^p_X(\T)$,
there holds
\begin{equation}\label{eq:weightedRed}
  \BNorm{\Exp\bnorm{\sum_j\radem_j S_{I_j}g_j}{X}}{L^p(\T,w)}
  \leq C\BNorm{\Exp\bnorm{\sum_j\radem_j g_j}{X}}{L^p(\T,w)}
\end{equation}
and also
\begin{equation}\label{eq:inftyRed}
  \BNorm{\Exp\bnorm{\sum_j\radem_j S_{I_j}g_j}{X}}{\exp L(\T)}
  \leq C\BNorm{\Exp\bnorm{\sum_j\radem_j g_j}{X}}{L^{\infty}(\T)}.
\end{equation}
\end{lemma}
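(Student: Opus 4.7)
The idea is to reduce the interval projections $S_{I_j}$ to a single Riesz projection $P_+ := S_{[0,\infty)}$ via modulations, and then invoke the weighted and Orlicz boundedness of $P_+$ (equivalently, the Hilbert transform) on $Y$-valued function spaces, where $Y$ is a UMD space built from $X$.

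For each $I_j = [a_j, b_j) \cap \Z$, a direct Fourier computation gives the operator identity
\begin{equation*}
 S_{I_j} = M_{e_{a_j}} P_+ M_{e_{-a_j}} - M_{e_{b_j}} P_+ M_{e_{-b_j}},
\end{equation*}
where $M_\phi$ denotes pointwise multiplication by $\phi$. It therefore suffices to control one term of the form $T := \sum_j \radem_j M_{e_{c_j}} P_+ h_j$, with $c_j \in \{a_j, b_j\}$ and $h_j := M_{e_{-c_j}} g_j$. Since $|e_{c_j}(t)| = 1$ pointwise in $t$, Kahane's contraction principle (Lemma~\ref{lem:Kahane-contr}) absorbs the outer modulations at the cost of a factor $2$, reducing the matter to estimating $\sum_j \radem_j P_+ h_j$; the inner modulation concealed in $h_j = M_{e_{-c_j}} g_j$ is undone at the end by one more application of the contraction principle.

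To handle $\sum_j \radem_j P_+ h_j$, view $H(t,\omega) := \sum_j \radem_j(\omega) h_j(t)$ as a function of $t$ with values in $Y := L^q(\Omega; X)$ for some fixed $q \in (1,\infty)$; since $X$ is UMD, so is $Y$. By the Kahane--Khintchine equivalence, for any Banach space $Z$ and any finite $\{z_j\} \subset Z$,
\begin{equation*}
 \Exp \bnorm{\sum_j \radem_j z_j}{Z} \eqsim \Big( \Exp \bnorm{\sum_j \radem_j z_j}{Z}^q \Big)^{1/q},
\end{equation*}
so the expectations appearing on both sides of \eqref{eq:weightedRed} and \eqref{eq:inftyRed} are, up to constants, the scalar functions $\Norm{H(t)}{Y}$ and $\Norm{(P_+ H)(t)}{Y}$; and by linearity $P_+ H(\cdot,\omega) = \sum_j \radem_j(\omega) P_+ h_j$. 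The desired bounds thus reduce to two mapping properties of $P_+$ on $Y$-valued functions, namely boundedness on $L^p_Y(\T,w)$ for $p \in (1,\infty)$ and $w \in A_p$, and boundedness from $L^\infty_Y(\T)$ into $\exp L_Y(\T)$.

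The main obstacle is supplying these two $Y$-valued bounds for the Riesz projection. The weighted-$L^p$ bound follows from the $L^p_Y$-boundedness of the Hilbert transform for UMD spaces (Bourgain) together with standard weighted vector-valued extrapolation; the $L^\infty \to \exp L$ bound reduces, via the $L^\infty \to \BMO$ mapping property of $P_+$ on UMD $Y$, to a $Y$-valued John--Nirenberg inequality. Once these ingredients are in hand, the remainder of the argument is the routine combination of the modulation identity and Kahane's contraction principle described above.
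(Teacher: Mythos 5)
Your proposal follows essentially the same route as the paper. The paper also splits each $S_{I_j}$ into two modulated half-line projections (written there as translated Hilbert transforms $\frac{i}{2}[e_{-a_j}He_{a_j}-e_{-b_j}He_{b_j}]$, which is equivalent to your $M_{e_{a_j}}P_+M_{e_{-a_j}}-M_{e_{b_j}}P_+M_{e_{-b_j}}$), absorbs the outer modulation with Kahane's contraction principle pointwise in $t$, pushes the random sum inside $H$ viewed as acting on functions valued in an auxiliary UMD space, invokes the vector-valued John--Nirenberg inequality together with $L^\infty\to\BMO$ boundedness of $H$ (resp.\ weighted $L^p$ boundedness), and finally peels off the inner modulation with one more contraction. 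The only cosmetic difference is that the paper works with $\Rad X$ (the closure of the Rademacher span in $L^1_X(\Omega)$, noting it is UMD by Kahane's inequality) rather than the ambient $L^q(\Omega;X)$ you use, but both choices furnish the needed UMD target space and yield the same estimate.
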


\begin{proof}
We write out the proof of \eqref{eq:inftyRed} and indicate the
changes for \eqref{eq:weightedRed}. We first note that each
$S_{I_j}$ is a linear combination of two translated Hilbert
transforms,
$S_{I_j}=\frac{i}{2}[e_{-a_j}He_{a_j}-e_{-b_j}He_{b_j}]$. By the
triangle inequality, it suffices to treat the first term. By the
contraction principle (applied for $p=1$),
\[\begin{split}
  \BNorm{\Exp\bnorm{\sum_j\radem_j e_{-a_j}H(e_{a_j}g_j)} {X}}{\exp L(\T)}
  &\leq 2\BNorm{\Exp\bnorm{\sum_j\radem_j H(e_{a_j}g_j)} {X}}{\exp L(\T)}\\
  &= 2\BNorm{H\Big(\sum_j\radem_j e_{a_j}g_j\Big)}{\exp L_{\Rad X}(\T)},
\end{split}\]
where $\Rad X$ is the Banach space formed as the closure of the
linear span of the functions $\radem_j x$, $x\in X$, in
$L^1_X(\Omega)$. This is again a UMD space (since by Kahane's
inequality, the same space with an equivalent norm would be
obtained by taking the closure in $L^2_X(\Omega)$). The same lines
are also true with the weighted $L^p$ space in place of $\exp L$.

We now make use of the John--Nirenberg inequality (valid in
arbitrary Banach spaces) and the $L^{\infty}$--$\BMO$-boundedness
of the Hilbert transform (in UMD spaces) to continue the
estimation
\[\begin{split}
  \lesssim \BNorm{H\Big(\sum_j\radem_j e_{a_j}g_j\Big)}{\BMO_{\Rad X}(\T)}
  &\lesssim \BNorm{\sum_j\radem_j e_{a_j}g_j}{L^{\infty}_{\Rad X}(\T)} \\
  &=\BNorm{\Exp\bnorm{\sum_j\radem_j e_{a_j}g_j}{X}}{L^{\infty}(\T)}
  \leq 2\BNorm{\Exp\bnorm{\sum_j\radem_j g_j}{X}}{L^{\infty}(\T)}.
\end{split}\]
In the weighted case, we may directly use the boundedness of $H$ in $L^p_X(\T,w)$, and the rest is similar with $L^p(\T,w)$ in place of $L^{\infty}(\T)$.
\end{proof}

With Lemma~\ref{lem:redToSmooth}, our task is reduced to proving that
\begin{equation}\label{eq:smoothToProve}
  \BNorm{\Exp\bnorm{\sum_j\radem_j T_{m_j}f}{X}}{L^p(\T,w)}
  \leq C\Norm{f}{L^p_X(\T,w)}
\end{equation}
for appropriate smooth majorants $m_j$ of $1_{I_j}$. To describe
our choice of the $m_j$, we recall the definition of the de la
Vall\'ee Poussin kernel (see e.g.\ \cite{Katznelson})
\[
  V_n=2K_{2n+1}-K_n,
\]
where $K_n$ is the Fej\'er kernel
\[\begin{split}
  K_n(t) &=\sum_{k=-n}^n\Big(1-\frac{\abs{k}}{n+1}\Big)e_k(t)
    =\frac{1}{n+1}\Big(\frac{\sin(n+1)\pi t}{\sin\pi t}\Big)^2.
\end{split}\]
By applying the estimates $|\sin (n+1)x|\le (n+1)|\sin x|$ for $x\in \R$
(use the Euler formulas) and $\sin x\geq 2x/\pi$ for $0\le x\le \pi/2$, one gets
\[\begin{split}
  K_n(t)  &\leq\min\Big(n+1,\frac{1}{4(n+1)t^2}\Big),\qquad\abs{t}\leq 1/2.
\end{split}\]
Thus $\hat{V}_{n-1}(k)=1$ for $\abs{k}\leq n$, and
\begin{equation}\label{eq:dlVPest}
  \abs{V_{n-1}(t)}\leq C\min(n,\frac{1}{n t^2}),\qquad\abs{t}\leq 1/2.
\end{equation}

Let $L$ stand for the common length of our intervals $I_j$. For a fixed $j$,
there
are $L+1$ consecutive integers $k$ such that
$\hat{V}_{L-1}(\cdot+k)=1$ on $I_j$. Let us pick such a $k$ which
is also divisible by $L$, and write $k=L k_j$. For uniqueness, let
us take the smaller  $k$ (and then the smaller $k_j$) if two
possibilities exist. Then the mapping $I_j\mapsto k_j$ is
one-to-one, and by reindexing the intervals if necessary, we may
assume that $k_j=j$, where
$j$ ranges over an  index set $J\subset \Z$.
 Thus we take
\[
  m_j=\hat{V}_{L-1}(\cdot+Lj), \qquad j\in J,
\]
so that
\[
  \sum_{j\in J}\radem_j T_{m_j}f(t)
  =\sum_{j\in J}\radem_j\int_{\T}e_{-Lj}(y)V_{L-1}(y)f(t-y)\ud y.
\]
For a fixed $t\in\T$, we want to estimate the norm of this object
in $\Rad X$, which we now equip with the norm of $L^2_X(\Omega)$
for convenience. Since $X$, as a UMD space, is $B$-convex, we have
$(\Rad X)^*\eqsim \Rad X^*$,
see e.g.~\cite{HW}, Section 3.
In particular, one has
\[
  \BNorm{\sum_{j\in J}\radem_j T_{m_j}f(t)}{\Rad X}
  \eqsim\sup\Big\{\Babs{\sum_{j\in J}\pair{\lambda_j}{T_{m_j}f(t)}}:
    \lambda_j\in X^*, \;\BNorm{\sum_{j\in J}\radem_j\lambda_j}{\Rad X^*}\leq 1\Big\}.
\]
In what follows, we put $\lambda_j=0$ for $j\notin J$, which allows
us to extend the summation to all indices $j\in \Z$.

We now manipulate the duality pairing appearing above:
\begin{equation}\label{eq:Glambda}\begin{split}
  G_{\lambda}f(t):=\sum_j\pair{\lambda_j}{T_{m_j}f(t)}
  &=\int_{\T}V_{L-1}(y)\pair{\sum_j e_{-Lj}(y)\lambda_j}{f(t-y)}\ud y\\
  &=:\int_{\T}V_{L-1}(y)\pair{g_{\lambda}(Ly)}{f(t-y)}\ud y,
\end{split}\end{equation}
where $g_{\lambda}$ is an $X^*$-valued function on $\R$ of period $1$,
\[
g_{\lambda}(y)=\sum_{j\in\Z} e_{-j}(y)\lambda_j.
\]

\begin{lemma}\label{lem:trigVsRad}
Let $X$ have type $2$. Then
\[
  \Norm{g_{\lambda}}{L^2_{X^*}(\T)}
  \leq C\Big(\Exp\bnorm{\sum_{j\in \Z}\radem_j\lambda_j}{X^*}^2\Big)^{1/2}.
\]
\end{lemma}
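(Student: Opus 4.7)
The plan is to use duality with $L^2_X(\T)$ and then randomize. For a test function $h \in L^2_X(\T)$ with $\Norm{h}{L^2_X(\T)} \leq 1$, scalar Parseval (applied to each pairing $\pair{\cdot}{x}$ with $x \in X$) gives
\[
\int_\T \pair{h(y)}{g_\lambda(y)} \ud y = \sum_j \pair{\hat h(j)}{\lambda_j}.
\]
I would then insert an auxiliary independent Rademacher sequence $(\radem_j)$ and use $\Exp \radem_j \radem_k = \delta_{jk}$ to rewrite this as $\Exp_\radem \pair{\sum_j \radem_j \hat h(j)}{\sum_j \radem_j \lambda_j}$. Two applications of Cauchy--Schwarz, first in the $X$--$X^*$ pairing and then in the probability space, dominate this by
\[
\Big(\Exp \bnorm{\sum_j \radem_j \hat h(j)}{X}^2\Big)^{1/2} \Big(\Exp \bnorm{\sum_j \radem_j \lambda_j}{X^*}^2\Big)^{1/2}.
\]
The second factor is exactly the right-hand side of the target inequality, so the whole problem is reduced (by taking the supremum over admissible $h$) to the Bessel-type estimate
\[
\Exp_\radem \bnorm{\sum_j \radem_j \hat h(j)}{X}^2 \leq C \Norm{h}{L^2_X(\T)}^2 \qquad (\star)
\]
uniformly over finite sequences of Fourier coefficients.

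For $(\star)$ I would use that type $2$ of $X$ implies non-trivial type, so by Pisier's $K$-convexity theorem $X$ is $K$-convex; in particular, the Rademacher projection is bounded on $L^2_X(\Omega)$. The estimate $(\star)$ is then the boundedness of the change-of-basis operator sending the trigonometric orthonormal system $\{e^{2\pi ij\cdot}\}_{j\in\Z} \subset L^2(\T)$ to the Rademacher system $\{\radem_j\} \subset L^2(\Omega)$, a direct consequence of Pisier's comparison of trigonometric and Rademacher orthonormal systems in $K$-convex Banach spaces. Combining with the Cauchy--Schwarz step and the resulting duality bound for $\Norm{g_\lambda}{L^2_{X^*}(\T)}$ yields the conclusion.

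The main obstacle will be $(\star)$. A tempting shortcut---first apply type $2$ to the $X$-valued coefficients $(\hat h(j))$ to get $\Exp \bnorm{\sum \radem_j \hat h(j)}{X}^2 \leq C \sum_j \norm{\hat h(j)}{X}^2$, and then appeal to a vector-valued Parseval $\sum_j \norm{\hat h(j)}{X}^2 \leq C \Norm{h}{L^2_X(\T)}^2$---is doomed, because the second inequality is Fourier type $2$ for $X$, which by Kwapien's theorem is equivalent to $X$ being isomorphic to a Hilbert space. One therefore cannot decouple through an intermediate $\ell^2_X$ norm on the Fourier coefficients; the type $2$ hypothesis enters not through $\ell^2$-domination of the coefficient sequence but only through the $K$-convexity it guarantees, and the boundedness of the Rademacher projection must act on $h$ as a whole.
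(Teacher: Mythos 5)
Your reduction to the estimate $(\star)$ is exactly the route the paper takes (same duality with $L^2_X(\T)$, same Parseval identity, same randomization via $\Exp\radem_j\radem_k=\delta_{jk}$, same double Cauchy--Schwarz), and your remark that one cannot decouple through an intermediate $\ell^2_X$ bound on the Fourier coefficients is a correct and useful observation. The gap is in your justification of $(\star)$ itself: $K$-convexity is \emph{not} sufficient. $K$-convexity is equivalent to non-trivial type (type $>1$), which is strictly weaker than type~$2$, and the estimate $(\star)$ genuinely requires type~$2$. There is no ``Pisier comparison of trigonometric and Rademacher systems in $K$-convex spaces'' in the form you need; what $K$-convexity gives is boundedness of the Rademacher projection \emph{within} $L^2_X(\Omega)$, which is a different operator from the change-of-basis map $L^2_X(\T)\to\Rad X$ sending $\sum_j e_j x_j\mapsto\sum_j\radem_j x_j$.

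A concrete counterexample: take $X=L^{3/2}(\T')$, which is $K$-convex (it has type $3/2$) but not type~$2$, and set $h(t)=\sum_{j=1}^n e_j(t)\,e_j(\cdot)\in L^2_X(\T)$. Then $\Norm{h}{L^2_X(\T)}=\Norm{D_n}{L^{3/2}(\T')}\eqsim n^{1/3}$, whereas $(\Exp\fnorm{\sum_j\radem_j e_j}{X}^2)^{1/2}\eqsim\Norm{(\sum_j|e_j|^2)^{1/2}}{L^{3/2}}= n^{1/2}$, so the ratio blows up like $n^{1/6}$. Thus $(\star)$ fails in a $K$-convex space, and the type~$2$ hypothesis cannot be diluted. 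The paper closes the gap by citing a theorem of Dodds and Sukochev (\cite{DS}, Theorem~1.3): in a Banach space of type~$2$, for any orthonormal system $(\phi_j)$ in $L^2(\mu)$ one has $(\Exp\fnorm{\sum_j\radem_j x_j}{X}^2)^{1/2}\leq C\Norm{\sum_j\phi_j x_j}{L^2_X(\mu)}$. Applied to the trigonometric characters this gives exactly $(\star)$; this comparison is in fact \emph{characteristic} of type~$2$ (test it on $\phi_j=\sqrt n\,1_{[(j-1)/n,j/n)}$), so it is the right tool and not replaceable by the weaker $K$-convexity.
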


\begin{proof}
Of course, we make use of duality:
\[
  \Norm{g_{\lambda}}{L^2_{X^*}(\T)}
  =\sup\Big\{\Babs{\int_{\T}\pair{g_{\lambda}(y)}{\phi(y)}\ud y}:
   \Norm{\phi}{L^2_X(\T)}\leq 1\Big\},
\]
where
\[\begin{split}
  \Babs{\int_{\T} &\pair{g_{\lambda}(y)}{\phi(y)}\ud y}
  =\Babs{\int_{\T}\pair{\sum_j e_{-j}(y)\lambda_j}{\sum_k e_k(y)\hat{\phi}(k)}\ud y}
  =\Babs{\sum_j\pair{\lambda_j}{\hat{\phi}(j)}} \\
  &=\Babs{\Exp\pair{\sum_j\radem_j\lambda_j}{\sum_k\radem_k\hat{\phi}(k)}}
  \leq\Big(\Exp\bnorm{\sum_j\radem_j\lambda_j}{X^*}^2\Big)^{1/2}
     \Big(\Exp\bnorm{\sum_k\radem_k\hat{\phi}(k)}{X}^2\Big)^{1/2}.
\end{split}\]
Now finally we employ the assumed type~$2$ property of $X$. By a
result of P.~G.\ Dodds and F.~A.\ Sukochev (\cite{DS}, Theorem 1.3), this
guarantees that
\[
  \Big(\Exp\bnorm{\sum_k\radem_k\hat{\phi}(k)}{X}^2\Big)^{1/2}
  \leq C\BNorm{\sum_k e_k \hat{\phi}(k)}{L^2_X(\T)}=C\Norm{\phi}{L^2_X(\T)},
\]
which completes the proof.
\end{proof}

Now we can continue with the expression \eqref{eq:Glambda}. We start by estimating the de la Vall\'ee Poussin kernel according to \eqref{eq:dlVPest}:
\[\begin{split}
  \abs{G_{\lambda}f(t)}
  &\leq\int_{\T}\abs{V_{L-1}(y)}\abs{\pair{g_{\lambda}(Ly)}{f(t-y)}}\ud y \\
  &\leq\Big(\int_{\abs{y}\leq 1/L} CL+\sum_{k=1}^{\infty}
    \int_{2^{k-1}/L<\abs{y}\leq 2^k/L} CL 2^{-2k}\Big)\norm{g_{\lambda}(Ly)}{X^*}\norm{f(t-y)}{X}\ud y \\
  &\leq C\sum_{k=0}^{\infty}2^{-k}
   \Big(\frac{L}{2^k}\int_{\abs{y}\leq 2^k/L}
    \norm{g_{\lambda}(Ly)}{X^*}^2\ud y\Big)^{1/2}
   \Big(\frac{L}{2^k}\int_{\abs{y}\leq 2^k/L}
    \norm{f(t-y)}{X}^2\ud y\Big)^{1/2} \\
  &\leq C\sum_{k=0}^{\infty}2^{-k}\Big(\frac{L}{2^k}1_{[-2^k/L,2^k/L]}*\norm{f}{X}^2(t)\Big)^{1/2}
   \leq C \big(M\norm{f}{X}^2(t)\big)^{1/2}
\end{split}\]
where Lemma~\ref{lem:trigVsRad}, and the condition that $\Norm{\sum\radem_j\lambda_j}{\Rad X^*}\leq 1$, were employed in the second-to-last step, while the last estimate is clear from the definition of the Hardy--Littlewood maximal function~$M$.

For $p>2$, the estimate~\eqref{eq:smoothToProve} now follows from
the well-known boundedness of $M$ in $L^q(\T,w)$ for $w\in A_q$
and $q=p/2>1$; the case $p=\infty$ is of course trivial. For
$p=2$, we need to stop the preceding computation before estimating
by the maximal function, and then observe that the convolution
operators with functions $\abs{J}^{-1}1_J$ are uniformly bounded on $L^1(\T,w)$
for $w\in A_{2/2}=A_1$.

It might be interesting to note that out of the two key steps of
the proof, Lemma~\ref{lem:redToSmooth} only employed the UMD
assumption, whereas Lemma~\ref{lem:trigVsRad} only used type~$2$.
Indeed the proof shows that a variant of the $\LPReq{p}(\T)$ estimate,
with the sharp spectral projections replaced by those of de la
Vall\'ee Poussin type, is true in all Banach spaces with type~$2$.

\section{Proof of Theorem \ref{real}. }\label{Proofreal}

\begin{lemma}\label{paso} For any Banach space $X$, the following statements are equivalent
\begin{itemize}
\item[(i)] Inequality (\ref{eq:LPB}) holds for families of disjoint intervals  of the same length in $\Z$.
\item[(ii)] Inequality (\ref{eq:LPBr}) holds for families of disjoint intervals  of the same length in $\mathbb{R}$.

\end{itemize}

\end{lemma}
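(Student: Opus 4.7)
Direction (ii)$\Rightarrow$(i) is a direct adaptation of the transference argument from \cite{GT} (invoked earlier for the general, unequal-interval setting); since that construction preserves the equal-length structure of the intervals, it yields (ii)$\Rightarrow$(i) essentially verbatim. I therefore focus on the substantive direction (i)$\Rightarrow$(ii).

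Let $f\in L^p_X(\R)$ be Schwartz with $\hat{f}$ smooth and compactly supported (a dense subclass), and let $J_j=[a_j,a_j+L)\subset\R$ be finitely many disjoint intervals of common length $L$. By dilation invariance of the estimate under $f(x)\mapsto f(\varepsilon x)$ and $J_j\mapsto\varepsilon^{-1}J_j$, I reduce to $L=1$. Then, since $a\mapsto S^{\R}_{[a,a+1)}f$ is continuous in $L^p_X(\R)$ for Schwartz $f$ (the operator kernel depends continuously on the endpoints, with $\hat f$ in $L^1$), I may approximate the $a_j$ by rationals with a common denominator $N$, and a further dilation by $N$ brings us to $J_j=[b_j,b_j+N)\subset\R$ with $b_j,N\in\Z$.

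I now transfer to a large torus. Pick $R\in\N$ large enough that $\supp\hat{f}\subset(-R/2,R/2)$, and set $\T_R:=\R/R\Z$. The periodisation $f_R(x):=\sum_{n\in\Z}f(x+nR)$ is a $\T_R$-trigonometric polynomial with Fourier coefficients $R^{-1}\hat{f}(k/R)$ at $k\in\Z$, and the integer intervals $I_j^R:=[Rb_j,R(b_j+N))\cap\Z$ have common cardinality $RN$ and are disjoint. Hypothesis (i), applied on $\T_R$ via the natural dilation identification with the standard torus $\T$ (under which the constant is preserved, since the $R^{-1/p}$ factors match on both sides), gives
\[
  \BNorm{\Exp\bnorm{\sum_j\radem_j S^{\T_R}_{I_j^R}f_R}{X}}{L^p(\T_R)}\leq C\Norm{f_R}{L^p_X(\T_R)}
\]
with $C$ independent of $R$.

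The final step is to let $R\to\infty$. The expression $R^{-1}\sum_{k\in I_j^R}\hat{f}(k/R)e^{i2\pi kx/R}$ defining $S^{\T_R}_{I_j^R}f_R(x)$ is precisely the Riemann sum of mesh $1/R$ for $\int_{J_j}\hat{f}(\xi)e^{i2\pi\xi x}\,\ud\xi=S^{\R}_{J_j}f(x)$; smoothness and compact support of $\hat{f}$, together with Schwartz decay of $f$, then yield convergence of both sides of the displayed inequality to their $\R$-analogues as $R\to\infty$. A density argument extends the resulting $\R$-inequality from Schwartz $f$ with band-limited spectrum to all $f\in L^p_X(\R)$. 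The main technical obstacle is the vector-valued, Rademacher-averaged limit passage: one has to interchange $\Exp$ with the Riemann-sum limit and upgrade pointwise convergence to $L^p_X$-convergence, which is feasible precisely because the compact support of $\hat{f}$ makes the relevant Rademacher sums finite, with only boundedly many nonvanishing and uniformly bounded terms.
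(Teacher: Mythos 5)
Your proof is correct and rests on the same core idea as the paper's: approximate the continuous spectral projections $S^{\R}_{J_j}$ by Riemann sums of discrete Fourier projections over a fine grid, and invoke hypothesis (i) on a torus of large period via rescaling. The implementations differ in three respects. First, you periodise $f$ to a trigonometric polynomial on $\T_R=\R/R\Z$; the paper instead keeps $f$ on $\R$ (initially supported on a fixed interval $[-A,A]$) and works with the rescaled discrete sum $S_I^{(\ell)}$ over the grid $\ell^{-1}\Z$ --- for compactly supported $f$ the two viewpoints coincide, and for Schwartz $f$ the periodisation contributes only negligible tails. Second, you reduce to rational endpoints so that the integer frequency blocks on $\T_R$ have exactly equal cardinality, which in turn requires the $L^p_X$-continuity of $a\mapsto S^{\R}_{[a,a+1)}f$; the paper avoids this by snapping each $J_j$ directly to the grid $\ell^{-1}\Z$, choosing near-maximal subintervals $I_{j,\ell}\subset J_j\cap\ell^{-1}\Z$ of a common length. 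Third, and most substantively, the paper proves the estimate first over the fixed bounded domain $[-A,A]$ (where the Riemann-sum convergence is uniform, so the $\ell\to\infty$ limit is elementary) and only then lets $A\to\infty$; you take the $L^p(\R)$ limit in one stroke over the growing domain $[-R/2,R/2]$. That can be made to work --- Fatou on the left and the periodisation-tail estimate $\|f_R-f\|_{L^p_X([-R/2,R/2])}\to 0$ on the right --- but the justification you give, centred on the finiteness and boundedness of the Rademacher sum, addresses the easy point and leaves the real one (uniform control of $S^{\T_R}_{I_j^R}f_R$ over an unbounded region) unargued; you should either spell out the Fatou step or adopt the paper's two-stage limit.
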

\begin{proof} The proof of $(ii) \Longrightarrow (i)$ can be got
by following  the proof of Theorem 2.5 in  \cite{GT}.

Now let us assume $(i)$, and let us prove $(ii)$.
We will use the fact that the discrete partial Fourier sums
\eqref{eq:SI} are Riemann sums of their continuous analogues
\eqref{eq:Dm2}.
Take a finite number of non-intersecting intervals
$\{J_j\}_{j=1}^m$ in $\R$
of equal lengths. Let us show first that for any $A>0$ and any $f\in L^p_X([-A,A])$,
\begin{equation}\label{eq:3A}
\|\Exp \sum_{j=1}^m \radem_j\, S_{J_j}^{\R}f\|_{L^p_X([-A,A])}
\le
C\Norm{f}{L^p_X([-A,A])}.
\end{equation}
We shall give an approximation argument.
By the assumption $(i)$,
\begin{equation} \label{eq:assump_i}
\BNorm{\Exp\bnorm{\sum_j\radem_j S_{I_j}f}{X}}{L^p([-1/2,1/2])}
\leq  C\Norm{f}{L^p_X([-1/2,1/2])}
\end{equation}
for any $f\in L^p_X([-1/2,1/2])$
and for any finite sequence $\{I_j\}$ of subintervals of
$\Z$ of the same length.
First we apply rescaling to \eqref{eq:assump_i}.
Given a function $f$ and real $\ell >0$,
we define a rescaled function $f_\ell(x) = \frac1\ell f(\frac{x}{\ell})$.
One has
\begin{eqnarray*}
\widehat{(f_{1/\ell})}(\lambda)  =
\hat{f}\,(\frac\lambda\ell).
\end{eqnarray*}
Given a finite subinterval $I$
of the discrete set $\ell^{-1} \mathbb{Z}$, define
\[
S_{I}^{(\ell)} f(x) = \frac{1}{\ell} \,\sum_{\lambda \in I}
\hat{f}(\lambda) e^{i 2\pi\lambda x}.
\]
Then $\ell I$ is a subinterval in $\mathbb{Z}$, and it is easy to see that
\[
S_{I}^{(\ell)} f(x) = \Big( S_{\ell I} (f_{1/\ell}) \Big)_\ell(x).
\]
We get from \eqref{eq:assump_i} the inequality
\begin{equation}\label{eq:dconstants}
\BNorm{\Exp\bnorm{\sum_{j=1}^m\radem_j S_{I_j}^{(\ell)}f}{X}}{L^p_X([-\ell/2, \ell/2])}
\leq  C\Norm{f}{L^p_X([-\ell/2, \ell/2])}
\end{equation}
for functions $f\in L^p_X([-\ell/2, \ell/2])$
and subintervals $I_j$ of $\ell^{-1}\mathbb{Z}$, with
the same constant $C$ as in \eqref{eq:assump_i}.

To prove \eqref{eq:3A}, take any $\ell>\max(2A,1/2|J_j|)$,
so that $[-\ell/2, \ell/2]\supset [-A, A]$.
We extend our function $f\in L^p_X([-A, A])$ to
a function in $L^p_X(\R)$ by putting
$f\equiv 0$ on $\R\setminus[-A, A]$.
It is easy to find
subintervals $I_{j,\ell}\subset J_j\cap \ell^{-1}\Z$ with the
following two properties:
\begin{enumerate}
\item For a fixed $\ell$, the intervals $I_{j,\ell}$ are disjoint and
have the same length;
\item $|J_j|-\frac 2 \ell \le |I_{j,\ell}| <|J_j| -\frac 1 \ell$.
\end{enumerate}

One easily gets from (2) an estimate
\[
\bigg|
\int_{J_j} g(\la)\,d\la
 - \frac 1 \ell \sum_{\la\in I_{j,\ell}} g(\la)
\bigg|_X
\le
\frac{\abs{J_j}}{2\ell} \;
\max_{\la\in {J_j}}\;
\big| \frac {dg(\la)}{d\la}\big|_X+
\frac 2 \ell
\max_{\la\in {J_j}}\; |g(\la)|_X
,
\] 
which holds for any $g\in C^1_X(J_j)$.
By putting here $g(\la)=\hat{f}(\la)e^{i2\pi\la x}$,
one gets
\begin{align}\label{eq:4A}
\max_{x\in[-A,A]}
\bigg|
\Exp \sum_{j=1}^m \radem_j S^{\R}_{J_j} f(x)
-
\Exp \sum_{j=1}^m \radem_j S_{I_{j,\ell}}^{(\ell)} f(x)
\bigg|_X
\le \frac K \ell \;\|f\|_{L^1_X([-A,A])} \to 0
\;\text{ as } \ell\to\infty;
\end{align}
the constant $K$ here depends on the intervals $J_j$ and
on $A$, but not on $\ell$.
By \eqref{eq:dconstants},
\[
\BNorm{\Exp\bnorm{\sum_{j=1}^m\radem_j S_{I_{j,\ell}}^{(\ell)}\,f}{X}}{L^p_X([-A,A])}
\leq C\Norm{f}{L^p_X([-A,A])}.
\]
Now \eqref{eq:3A} is obtained by
passing to the limit as $\ell$ goes to $\infty$
and taking into account \eqref{eq:4A}.

Next, one can pass to the limit
in \eqref{eq:3A} as $A\to\infty$
to get $(ii)$.
\end{proof}

Now we can give  the proof of Theorem \ref{real}. By using Theorem
\ref{toro} and Lemma \ref{paso}  we have that if a Banach space
$X$ is a UMD space with type $2$, then $X$  has $\LPReq{p}(\R)$
for all $p\in[2,\infty)$. In this case  by using  Lemma
\ref{lem:redToSmooth} and the parallel estimate to
\eqref{eq:smoothToProve} developed  in the original paper
\cite{RdF:83} we get that $X$ has $\LPReq{p}(\R,w)$ for all
$p\in[2,\infty)$ and all $w\in A_{p/2}$. For the converse implications
we just use  Lemma \ref{paso} and Theorem \ref{toro}.

\section{$L^\infty$--$\BMOtwo$ unboundedness}\label{miscelanea}

One could ask whether in the $\LPReq{\infty}(\T)$ property, the
$\exp L$ norm can be replaced by
the $\BMOtwo$ norm. Here we show that it
is not the case; moreover,
\textit{even in the one-interval inequality
\begin{equation}\label{eq:BMO} \|S_I f \|_{\BMOtwo(\T)} \le C \| f\|_{L^\infty(\T)},
\end{equation}
$C$ cannot be chosen independently on the subinterval $I\subset \Z$}.

Indeed, take the interval $I= [0, n)\subset \Z$,  it is well known that
\[
S_{[0,n)}f = \frac{i}{2}\Big( H f - e_n H ( e_{-n} f ) \Big),
\]
where $H=-i[S_{[0,\infty)}-S_{(-\infty,0)}]$ is the Hilbert transform on $\T$, and we recall our notation $e_n(x)=e^{i 2\pi nx}$.
As $H$ is bounded from $L^\infty(\T)$ into $\BMOtwo(\T)$ ,
inequality \eqref{eq:BMO} is equivalent to the existence of a
constant  $C$ such that for any $n\in \Z$, we have
\begin{equation}\label{eq:BMO3}
\| e_n H( e_{-n} f )\|_{\BMOtwo(\T)} \le C \| f\|_{L^\infty(\T)}.
\end{equation}
By choosing $f = e_n g$ with $g \in L^\infty(\T)$ we
infer that
\begin{equation}\label{eq:BMO4}
 \| e_n H(g)\|_{\BMOtwo(\T)} \le C \| g\|_{L^\infty(\T)},
\end{equation}
where $C$ should not depend on $n$.   Choose a function
$g \in L^\infty(\T)$ such that
$Hg \in \BMOtwo(\T)\setminus
L^\infty(\T)$, then next Lemma shows that a uniform estimate as in
\eqref{eq:BMO4} cannot hold.

\begin{lemma}\label{lem:BMO}
Suppose $f\in \BMOtwo(\T)$ satisfies
$\sup_{n\in\Z} \Norm{ e_n  f}{\BMOtwo(\T)}\le C<\infty $.
Then $f\in L^\infty(\T)$.
\end{lemma}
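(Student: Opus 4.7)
The plan is to exploit the fact that at a Lebesgue point of $f$, the function is essentially constant on small intervals, while multiplying by a character of very high frequency $e_n$ makes the average $(e_n)_I$ negligible on such an interval. Thus the mean oscillation of $e_n f$ on $I$ must pick up approximately $\abs{f(x_0)}$, which is then bounded by the assumed uniform constant $C$. Since $\BMO(\T)\subset L^1(\T)$, almost every point is a Lebesgue point of $f$, so this will give $\abs{f}\le C$ a.e.

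More precisely, I would use the $L^1$-based seminorm $\fnorm{g}{\BMO}=\sup_I\frac{1}{\abs{I}}\int_I\abs{g-g_I}$, and fix a Lebesgue point $x_0$ of $f$ together with $\eps>0$. First I would choose an interval $I\ni x_0$ short enough that
\[
  \frac{1}{\abs{I}}\int_I\abs{f-f(x_0)}\,dt<\eps,
\]
and then choose an integer $n$ large enough that
\[
  \abs{(e_n)_I}=\frac{\abs{\sin(\pi n\abs{I})}}{\pi n\abs{I}}<\eps,
\]
which is possible since $(e_n)_I\to 0$ as $n\to\infty$ for $I$ fixed.

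The next step is a triangle-inequality bookkeeping: write $e_n f=f(x_0)e_n+e_n(f-f(x_0))$ to obtain
\[
  \bigl|(e_n f)_I-f(x_0)(e_n)_I\bigr|\le\frac{1}{\abs{I}}\int_I\abs{f-f(x_0)}<\eps,
\]
and combine with $\abs{e_n-(e_n)_I}\ge 1-\abs{(e_n)_I}$ (since $\abs{e_n}\equiv 1$) to get
\[
  \frac{1}{\abs{I}}\int_I\abs{e_nf-(e_nf)_I}\ge\abs{f(x_0)}\bigl(1-\abs{(e_n)_I}\bigr)-2\eps\ge\abs{f(x_0)}-(2+\abs{f(x_0)})\eps.
\]
Since the left-hand side is at most $\fnorm{e_n f}{\BMO}\le C$, letting $\eps\to 0$ gives $\abs{f(x_0)}\le C$. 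This holds at almost every point, so $\Norm{f}{L^\infty(\T)}\le C$.

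I do not anticipate a serious obstacle: the argument is essentially the remark that high-frequency modulation of a BMO function, by washing out the constant-like part of $f$ near a Lebesgue point, reconstructs $\abs{f(x_0)}$ inside the mean oscillation. The only point requiring care is the simultaneous choice of $I$ (small so that $f\approx f(x_0)$) and $n$ (large so that $(e_n)_I\approx 0$), but these are independent conditions and can both be arranged.
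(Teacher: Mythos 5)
Your proof is correct and rests on the same core observation as the paper's---that multiplying by a very high frequency character forces $\fNorm{e_nf}{\BMOtwo}$ to reveal the pointwise size of $f$---but the execution is genuinely different. The paper's argument is shorter: since $\abs{e_n}\equiv 1$, one writes
\[
\frac{1}{\abs{K}}\int_K\abs{f}
=\frac{1}{\abs{K}}\int_K\abs{fe_n}
\le\frac{1}{\abs{K}}\int_K\abs{fe_n-(fe_n)_K}+\abs{(fe_n)_K},
\]
bounds the first term by $\fNorm{fe_n}{\BMOtwo}\le C$, and lets $\abs{n}\to\infty$ so that the second term vanishes by the Riemann--Lebesgue lemma; this gives $\frac{1}{\abs{K}}\int_K\abs{f}\le 2C$ for every interval $K$ of length less than $1$, hence $M\abs{f}\le 2C$ everywhere and $\Norm{f}{L^\infty}\le 2C$. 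Your route instead fixes a Lebesgue point $x_0$, decomposes $e_nf=f(x_0)e_n+e_n(f-f(x_0))$, and extracts a lower bound of roughly $\abs{f(x_0)}$ on the mean oscillation of $e_nf$ over a small interval. What your version buys is that it needs no Riemann--Lebesgue lemma for $f$ itself---only the elementary fact that $(e_n)_I\to 0$ for a fixed $I$; what the paper's version buys is brevity, since the identity $\abs{f}=\abs{fe_n}$ makes the Lebesgue-point decomposition and the lower-bound bookkeeping unnecessary. Both arguments ultimately invoke Lebesgue differentiation to pass from control of interval averages to an $L^\infty$ bound.
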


\begin{proof} We
interpret $f$ as a $1$-periodic function on $\R$.
For an interval $K$ of the real line of length less than $1$
and
a $1$-periodic function $g$ on $\R$,
put $g_K=\frac 1 {\abs{K}}\int_K g$.
One has the estimates
\[
\frac 1 {|K|}\int_K |f|=
\frac 1 {|K|}\,\int_K |fe_n|\le
\frac 1 {|K|}\,\int_K \big|fe_n-(fe_n)_K\big|\,+\,|(fe_n)_K|\le C+C=2C
\]
for a sufficiently large $|n|$, due to the Riemann-Lebesgue lemma.
It follows that $M|f|(x)\le 2C$ for any $x\in \T$, and therefore
$\Norm{f}{L^\infty(\R)}\le 2C$.
\end{proof}

The conclusion of the above  is that an $L^\infty$--$\BMOtwo$ estimate in Theorem \ref{toro} has no sense.

\end{document}